\theoremstyle{plain}
\newtheorem{satz}{Theorem}[section]
\newtheorem{theorem}{Theorem}[section]
\newtheorem{coro}{Corollary}[section]
\newtheorem{lemma}{Lemma}
\theoremstyle{definition}
\newtheorem{defi}{Definition}
\theoremstyle{definition}
\newtheorem*{rem}{Remark}
\theoremstyle{remark}
\DeclareMathOperator{\id}{id}
\DeclareMathOperator{\Hom}{\rm Hom}
\DeclareMathOperator{\End}{\rm End}
\DeclareMathOperator{\Aut }{\rm Aut}
\DeclareMathOperator{\Dim}{\rm \underline{dim}}
\DeclareMathOperator{\rk}{\rm rk}
\DeclareMathOperator{\inj}{\hookrightarrow}
\newcommand{\oTo}{\xymatrix{ \ar@{^{(}->}[r]|{\mathbf{O}}& }}
\newcommand{\cTo}{\xymatrix{ \ar@{^{(}->}[r]|{\mathbf{|}}& }}
\newcommand{\coTo}{\xymatrix{ \ar@{^{(}->}[r]|{\mathbf{O}}|{\mathbf{|}}& }}
\DeclareMathOperator{\surj}{\twoheadrightarrow }
\newcommand{\Mat }{\mathrm{Mat}}
\newcommand{\Gl}{\mathbf{Gl}}
\DeclareMathOperator{\soc}{soc}
\DeclareMathOperator{\rad}{rad}
\DeclareMathOperator{\Top}{top}
\newcommand{\si}{\sigma}
\newcommand{\la}{\lambda}
\newcommand{\La}{\Lambda}
\newcommand{\C}{\mathbb{C}}
\newcommand{\N}{\mathbb{N}}
\newcommand{\Q}{\mathbb{Q}}
\newcommand{\Z}{\mathbb{Z}}
\newcommand{\mcF}{\mathcal{F}}
\newcommand{\mcH}{\mathcal{H}}
\newcommand{\mcL}{\mathcal{L}}
\newcommand{\mcM}{\mathcal{M}}
\newcommand{\mcO}{\mathcal{O}}
\newcommand{\mcS}{\mathcal{S}}
\newcommand{\dd}{\underline{d}}
\newcommand{\ee}{\underline{e}}
\newcommand{\ddd}{\underline{\mathbf{d}}}
\newcommand{\eee}{\underline{\mathbf{e}}}
\DeclareMathOperator{\gdim}{gdim}
\newcommand{\fl}{\mathrm{Fl}}
\newcommand{\Rd}{\mathrm{R}(\dd)}
\newcommand{\FMd}{{\rm Fl}\binom{M}{\underline{\mathbf{d}}}}
\newcommand{\fYd}{{\rm f}\binom{Y}{\underline{\mathbf{d}}}}
\newcommand{\fYbd}{{\rm f}\binom{Y\setminus b}{\underline{\mathbf{d}}-\dd^1}}
\newcommand{\fYdk}{{\rm f}\left( \binom{Y}{\ddd} ; k\right) }
\newcommand{\fldk}{{\rm f}\left( \binom{\lambda }{\ddd } ; k\right) }
\newcommand{\fldek}{{\rm f}\left( \binom{Y}{\ddd } ; e_{\ddd} - k\right) }
\newcommand{\fYbdm}{{\rm f}\left( \binom{Y\setminus{b_m}}{\underline{\mathbf{d}}-\dd^1} ; k+m-s\right) }
\newcommand{\GrMe}{{\rm Gr}\binom{M}{\ee}}
\newcommand{\FMdone}{{\rm Fl}\binom{M_1}{{\underline{\mathbf{d}}}^1}}
\newcommand{\FMdt}{{\rm Fl}\binom{M_t}{{\underline{\mathbf{d}}^t}}}
\newcommand{\FNe}{{\rm Fl}\binom{N}{\underline{\mathbf{e}}}}
\newcommand{\rp}{\rm rp}
\newcommand{\vierzehn}{14}
\newcommand{\dreizehn}{13}
\newcommand{\zwoelf}{12}
\newcommand{\elf}{11}
\newcommand{\zehn}{10}
\begin{document}


\title[Cell decomposition of quiver flag varieties]{Cell decompositions of quiver flag varieties for nilpotent representations of the oriented cycle}

\author{Julia Sauter}
\address{Faculty of Mathematics, 
Bielefeld University
Postfach 100 131
D-33501 Bielefeld}

\begin{abstract}
Generalizing Schubert cells in type $A$ and a cell decomposition of Springer fibres in type $A$ found by L. Fresse we prove that varieties of complete flags in nilpotent representations of an oriented cycle admit an affine cell decomposition parametrized by multi-tableaux. 
We show that they carry a torus operation and describe the $T$-equivariant cohomology using Goresky-Kottwitz-MacPherson-theory.  
As an application of the cell decomposition we obtain a vector space basis of certain modules (for quiver Hecke algebras of nilpotent representations of this quiver), similar modules have been studied by Kato as analogues of standard modules. 
\end{abstract}
%
%

\subjclass[2010]{Primary 14M15; Secondary 16G20}


\keywords{Quiver flag variety, Schubert cell, quiver Hecke algebra}

\maketitle
 
\section{Introduction}  
We study varieties of complete flags in nilpotent representations associated to an oriented cycle. In this situation the filtrations by radicals and socles play a special role (for an indecomposable module they are the functorial filtrations for a string module). Restriction to the one-dimensional subspace and relative position with respect to these flags gives us (recursively) a cell decomposition into affine cells. This method goes back to a work of Spaltenstein \cite{Sp}, compare section 3.  
For the loop (or Jordan) quiver with the zero endomorphism we get the Schubert cell decomposition for $\Gl_n$ and for more general representations we reobtain the cell decomposition for Springer fibres in type $A$ studied by L. Fresse  in \cite{Fr}.  
An efficient way to parametrize the cells is book-keeping of the relative position in terms of multi-tableaux (see section 4) and this gives a combinatorial tool to describe the Betti numbers of these quiver flag varieties. 

We also observe that the torus action which is given by scalar multiplication on each indecomposable summand operates on the quiver flag variety with finitely many fixed points and one-dimensional orbits. In fact, every cell is a limit cell of a unique torus fix point in it and the cell decomposition is an instance of a Bialynicki-Birula decomposition (of a non-smooth variety, see e.g. \cite{Car}). The theory developped by Goresky, Kottwitz and MacPherson (see \cite{GKM}) gives a description of the torus equivariant cohomology of these varieties. 

As an application, the Borel-Moore homology groups appear as modules for quiver Hecke algebras of nilpotent representations of the oriented cycle, 
see \cite{Ka3} for the definition in a more restrictive situation and a general study of their properties (Kato's situation does not apply because it requires all multiplicites $L_{\la}$ to be non-zero).
Quiver Hecke algebras are known to be graded Morita equivalent to (positively graded) standardly stratified algebras in the sense of Mazorchuk \cite{Ma} and under this equivalence Kato's standard modules correspond to (Mazorchuk's) two types of standard modules.

\section{Nilpotent representations of the oriented cycle and quiver flag varieties}

Let $K$ be an algebraically closed field. Let $A=KQ$ where $Q$ is the oriented cycle with vertices $Q_0=\{1,\ldots , n\}$ identified with their residue classes in $\Z/n\Z$ and  $Q_1=\{a_i\colon i\to i+1 \mid i\in \Z/n\Z\}$. An $A$-module $M$ is given by vector spaces $V_1, \ldots , V_n$ and linear maps $f_i\colon V_i\to V_{i+1}$. Its $Q_0$-graded dimension is given by $\Dim M=(\dim V_1, \ldots , \dim V_n)$.  We will only look at finite-dimensional nilpotent representations, that means $\dim V_i<\infty $ and 
$f_{i+n-1} \circ \cdots \circ f_{i+1}\circ f_i\colon V_i\to V_i$ is nilpotent for every $i\in Q_0$.  

\paragraph{Indecomposable nilpotent $A$-modules} For $1\leq i\leq n$ we write $S_i=E_i[1]$ for the simple left $A$-module supported in the vertex $i$. For $j\in Q_0, \ell\in \N $ we write $E_{j}[\ell]$ for the unique indecomposable $A$-module with socle $S_j$ and $K$-vector space dimension $\ell $.
We have  $\soc E_{j}[\ell ] = S_j\subset E_{j}[\ell ]$ the inculsion of the socle, $\rad E_{j}[\ell ]= E_{j}[\ell-1] \subset E_{j}[\ell ]$ (with $E_j[0]:=0$) the inclusion of the radical and $E_j[\ell] \surj \Top E_j[\ell] = S_{j-\ell +1}$ the quotient map to the top.

\begin{defi}
Let $M$ be a nilpotent $A$-module of dimension $\Dim M =:\dd \in \N_0^{Q_0}$ and 
$\ddd := (0=\dd^0 , \dd^1 ,\ldots , \dd^{r}=\dd)$ with $\dd^k \in \N_0^{Q_0}$ be a sequence  We define 
\[
\FMd := \{ U=(U^0\subset \cdots \subset U^{r}=M) \mid U^k \; A 
\text{-submodule}, \; \Dim U^k =\dd^k \}
\]

This defines a projective $K$-variety, we call it the \textbf{quiver flag variety} for $(M, \ddd )$. We will always assume that the flags are complete i.e. $\left| \dd^{t+1}\right|- \left|\dd^t \right| =1 , \; 1\leq t \leq r -1$ (where $\left| v\right| =\sum_{i=1}^nv_i$ for $v\in \N_0^{Q_0}$), with one exception: If $\ddd=(0, \ee , \dd)$ we denote the quiver flag variety by $\GrMe$. 
\end{defi}

For $d\in \N$ we denote $\fl (d)$ the variety of complete flags in $K^d$ and $\fl(0):=pt$.   
\paragraph{Relative position}
Let $\fl (\ddd ):=\prod_{i\in Q_0} \fl (\dd_i), \fl (\eee ) :=\prod_{i\in Q_0} \fl (\ee_i )$ with $\dd_i=(0=d^0_i \leq d_i^1 \leq \cdots \leq d_i^{r}),  \ee_i=(0=e^0_i \leq e_i^1 \leq \cdots \leq e_i^{\mu})$, $d_i^{r}=e_i^{\mu}$ for all $i\in Q_0$. The relative position map  
$\rp \colon \fl(\ddd )\times \fl (\eee ) \to \prod_{i\in Q_0} \Mat \bigl((r +1)\times (\mu +1) , \N_0 \bigr)$ is defined as  
$
\rp (U_i^{\bullet}, W_j^{\bullet})_{i,j\in Q_0} := \bigl((\dim U_i^k \cap W_i^\ell)_{k,\ell}\bigr)_{i\in Q_0}. 
$ 
Now fix $W\in \fl (\eee )$ and given $w\in \prod_{i\in Q_0} \Mat \bigl((r +1)\times (\mu +1) , \N_0 \bigr)$ we set 
\[ \fl (\ddd )_{w} :=\{ U\in \fl (\ddd )\mid \rp (U,W)=w\}. \]
For our fixed representation $M$ we will assume from now on $V_i=K^{d_i}$, $i\in Q_0$ and for the quiver flag variety from before, we set 
\[ 
\FMd_{w} := \FMd \cap \fl (\ddd )_{w} 
\]
This gives the \textbf{stratification by relative position (with the fixed flag $W$) }in $\FMd$.

\section{Spaltenstein's fibration and the cell decomposition}

Let $M$ be an $A$-module. 
If $L$ is $1$-dimensional $Q_0$-graded subvector space of $M$, the inclusion 
$j\colon L\inj M$ is $A$-linear if and only if $L\subset \soc (M)$. Thus for a dimension vector $\ee$ with $\ee =e_i$ for some $i\in Q_0$,  we have an isomorphism ${\rm Gr} \binom{M}{\ee} \cong {\rm Gr} \binom{s_i  }{1 }=\mathbb{P}^{s_i -1}$ where $\underline{s}:= \Dim \soc (M)$. 

We need the following preparation. 
Let $i\in Q_0$ and denote by $M_{(i)}$ the maximal subrepresentation of a representation $M$ such that $\soc (M_{(i)})$ is a direct sum of copies of $S_i$. We get $M=\bigoplus_{i\in Q_0}M_{(i)}$ and we can see $\Aut (M_{(i)})\subset \Aut (M)$ as a subgroup.  
We denote by $F=F_M$ the underlying $Q_0$-graded flag of the following flag of submodules
$ 0\subset \soc (M) \cap \rad^m (M) \subset \cdots \subset \soc (M)\cap \rad^2 (M) \subset  \soc (M) \cap \rad (M) \subset \soc (M) $. 

Now fix $i\in Q_0$ and let $s_i:= \dim \soc(M_{(i)})$. We can consider $F_i$ as a flag in the vector space $\soc M_{(i)}$. We denote by  $P=P_i \subset \Gl_{s_i}$ the stabilizer of this flag.  
The restriction to the socle gives a group homomorphism 
\[ 
\varphi\colon \Aut(M_{(i)} ) \to \Gl_{s_i}.
\]
We fix a vector space basis for $M_{(i)}$ which is compatible with the Krull-Schmidt decomposition and a complete flag refining $F_i$ such that its stabilizer $B\subset P$ is a lower triangular standard Borel.

\begin{lemma} The image of $\varphi $ is $P$ and 
there is a group morphism $\theta \colon P\to \Aut (M_{(i)}) $ such that $\varphi\circ \theta =\id_P$. 
\end{lemma}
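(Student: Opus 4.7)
My plan is to use the Krull--Schmidt decomposition $M_{(i)} = \bigoplus_{j=1}^N E_i[\ell_j]$ and to build $\theta$ from the canonical socle-preserving embeddings among the summands. Let $v_j \in \soc E_i[\ell_j]$ be the socle generator from the chosen basis, so that $v_1,\ldots,v_N$ is the given basis of $\soc M_{(i)}$. A direct computation using the uniseriality of each $E_i[\ell_j]$ gives $\soc(M_{(i)}) \cap \rad^k(M_{(i)}) = \bigoplus_{j:\ell_j > k} K v_j$, so the flag $F_i$ groups the $v_j$'s by decreasing length $\ell_j$, and $P$ is the corresponding block-triangular parabolic of $\Gl_{s_i}$. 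The inclusion $\im \varphi \subseteq P$ is then immediate: any $\alpha \in \Aut(M_{(i)})$ preserves both the socle and the radical powers, hence their intersections, hence each step of $F_i$.

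The core of the argument is the construction of $\theta$. Whenever $\ell_k \geq \ell_j$, the uniserial module $E_i[\ell_k]$ contains a unique submodule of dimension $\ell_j$, namely $\rad^{\ell_k-\ell_j}(E_i[\ell_k]) \cong E_i[\ell_j]$. This yields a canonical embedding $\iota_{kj} \colon E_i[\ell_j] \hookrightarrow E_i[\ell_k]$, normalised by the requirement $\iota_{kj}(v_j) = v_k$; if $\ell_k < \ell_j$, set $\iota_{kj} = 0$. Given $g = (g_{kj}) \in P$ (matrix entries with respect to the basis $\{v_j\}$), I define
\[
\theta(g) := \sum_{k,j} g_{kj}\, \iota_{kj} \;\in\; \End(M_{(i)}).
\]
Since $g \in P$ forces $g_{kj} = 0$ whenever $\ell_k < \ell_j$, this formula is compatible with the convention on $\iota_{kj}$.

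The three required properties can then be checked in turn. Restricting $\theta(g)$ to $\soc M_{(i)}$ returns the matrix $(g_{kj})$ because $\iota_{kj}$ sends $v_j$ to $v_k$, so $\varphi \circ \theta = \id_P$. After grouping summands by common length and ordering the lengths decreasingly, $\theta(g)$ is block upper-triangular with diagonal blocks equal to the diagonal blocks of $g$ inside the Levi $\prod_{\ell} \Gl_{m_\ell} \subseteq P$; these are invertible, so $\theta(g) \in \Aut(M_{(i)})$. Multiplicativity $\theta(gh) = \theta(g)\theta(h)$ reduces, after expansion, to the compatibility
\[
\iota_{km} \circ \iota_{mj} = \iota_{kj} \qquad \text{for } \ell_k \geq \ell_m \geq \ell_j,
\]
which follows from the uniqueness of the $\ell_j$-dimensional submodule of $E_i[\ell_k]$ together with the normalisation $\iota_{kj}(v_j) = v_k$.

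Once $\theta$ is shown to be a group homomorphism into $\Aut(M_{(i)})$, the equality $\varphi \circ \theta = \id_P$ gives $P = \theta(P) \subseteq \im\varphi$, and combined with the first inclusion we conclude $\im\varphi = P$. I expect the only mildly delicate point to be the verification of the compatibility of the $\iota_{kj}$'s; the remaining steps are bookkeeping for the Krull--Schmidt decomposition.
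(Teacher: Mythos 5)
Your construction is essentially the paper's own: the paper defines a $K$-algebra homomorphism $\Theta\colon \mathfrak{p}\to \End_A(M_{(i)})$, $\Theta((a_{s,t}))=\sum a_{s,t}\theta_{s,t}$, where the $\theta_{s,t}$ are exactly your inclusions between summands (zero when the target is shorter), and obtains $\theta$ by restricting to units; you carry out the same construction at the group level and check invertibility and multiplicativity by hand. The one step that needs repair is the point you yourself flag as delicate. The two conditions ``image equals the unique $\ell_j$-dimensional submodule of $E_i[\ell_k]$'' and ``$v_j\mapsto v_k$'' do \emph{not} determine $\iota_{kj}$ uniquely: $\End_A(E_i[\ell])$ is a local ring which in general contains nonzero nilpotents, and these annihilate the socle (for the loop quiver, multiplication by $x$ on $K[x]/(x^\ell)$), so two embeddings with the same image and the same value on $v_j$ may differ by an automorphism of the source that is the identity on the socle. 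Hence the cocycle identity $\iota_{km}\circ\iota_{mj}=\iota_{kj}$ does not follow from your stated normalisation; for an incoherent choice of the $\iota$'s it can genuinely fail, which would break multiplicativity of $\theta$. The fix is to pin down each $\iota_{kj}$ concretely, e.g.\ by matching the chosen basis vectors of the two uniserial summands box by box --- this is precisely what the paper's ``vector space basis compatible with the Krull--Schmidt decomposition'' is for --- after which the compatibility holds on the nose and the rest of your argument goes through.
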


\begin{proof} Clearly the image is contained in $P$ since any $A$-linear map maps radicals to radicals. We look at the $K$-algebra homomorphism $\Phi$ given by taking socle 
whose restriction to the units gives the map   
$\varphi$. 
\[ \Phi \colon \End_A (M_{(i)}) \to \mathfrak{p}:=\{ f\in \End_K (\soc M_{(i)})\mid f (F_i^k)\subset F_i^k, k\leq m\} .\]
If we fix a vector space basis for $M$ compatible with the direct sum decomposition, we can define a $K$-algebra homomorphism $\Theta \colon \mathfrak{p}\to \End_A(M_{(i)})$ such that 
$\Phi\circ \Theta =\id_{\mathfrak{p}}$ as follows. 
Any coordinate $(s,t)$ in $\End_K(\soc M_{(i)})$ corresponds (by our choice of a  basis) to a pair of socles of direct summands $E_i[\ell]$ and $E_i[k]$ of $M_{(i)}$. If $k<\ell$, then there is no nonzero $A$-linear map $E_i[\ell]\to E_i[k]$, we set $\theta_{s,t} =0$. If $k\geq \ell $ we fix the (unique) inclusion $\theta_{s,t}\colon E_i[\ell] \subset E_i[k]$ of the submodule. Then, $\Theta ((a_{s,t})) := \sum_{s,t} a_{s,t}\theta_{s,t}$ defines the desired map. 
\end{proof}
We call two points $U^\bullet, W^\bullet \in \FMd$ equivalent if 
the quotients of $M/U^k$ and $M/V^k$ are isomorphic $A$-modules for $1\leq k\leq r$. There are only finitely many equivalence classes, we denote by $sp \colon \FMd \to \mcS, U^\bullet \mapsto ([M/U^k])_{1\leq k\leq r}$ the map to the equivalence class.  

Now we can prove the following analogue of a result of Spaltenstein \cite[Lemma, p. 453]{Sp}.

\begin{theorem}[Spaltenstein's fibration] Let $Q$ be the oriented cycle with $n$ vertices, and $\ddd=(\dd^0,\dd^1,\ldots , \dd^{r}=\dd)$ a complete dimension filtration. Let $M$ be a $\dd$-dimensional nilpotent $A$-module, for $w\in \prod_{i\in Q_0} \Mat (1 \times (s_i +1), \N_0)$ we write $()_w$ for the relative position with respect to a complete flag refining $\soc(M) \cap \rad^{\bullet}(M)$. 
Then, there is an isomorphism of algebraic varieties 
\[ 
 f\colon p^{-1} ({\rm Gr} \binom{M}{\dd^{1}}_{w}) \to
{\rm Gr} \binom{M}{\dd^{1}}_{w} \times \FNe 
\]  
with $\eee:= (\dd^1-\dd^1,\dd^2-\dd^1,\ldots , \dd^{r }-\dd^1)$, $p\colon \FMd \to {{\rm Gr} \binom{M}{\dd^{1}}}$ is the map forgetting all but the first subspace, and $N = M/U_0$ with $(U_0\subset M) \in  {{\rm Gr} \binom{M}{\dd^{1}}}_w $ arbitrary, such that the following 
diagram is commutative 
\[ 
\xymatrix{ &{\rm Gr} \binom{M}{\dd^{1}}_{w} & \\ 
p^{-1} ({\rm Gr} \binom{M}{\dd^{1}}_{w})\ar[ur]^{p} \ar[rr]^f \ar[dr]_{sp} &&{\rm Gr} \binom{M}{\dd^{1}}_{w} \times \FNe  \ar[ul]_{pr_1}\ar[dl]^{([N], sp)\circ pr_2} \\ &\mcS .&
}
\]
\end{theorem}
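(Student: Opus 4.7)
The plan is to construct an algebraic trivialisation of the restriction of $p$ over ${\rm Gr}\binom{M}{\dd^{1}}_{w}$ via a family of automorphisms $\sigma(U)\in \Aut(M)$ depending algebraically on $U$ and sending a fixed $U_0$ to $U$.

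First I would analyse the base. Completeness of $\ddd$ forces $\dd^1=e_i$ for a unique $i\in Q_0$, so every $U\in {\rm Gr}\binom{M}{\dd^{1}}$ is a line in $\soc M_{(i)}$ and ${\rm Gr}\binom{M}{\dd^{1}}\cong \mathbb{P}(\soc M_{(i)})$. The restriction $W_i$ of $W$ to component $i$ refines $F_i$, so the stabiliser $B\subset \Gl_{s_i}$ of $W_i$ is contained in $P=P_i$, and the stratum ${\rm Gr}\binom{M}{\dd^{1}}_{w}$ is a classical Schubert cell of lines of prescribed relative position to $W_i$. Choosing coordinates so that $U_0=\langle e_\ell\rangle$ for the index $\ell$ dictated by $w$, the cell is parametrised by the unipotent subgroup $U^-=\{I+\sum_{k<\ell}c_k E_{k\ell}\}\subset B$ through $(c_k)\mapsto (I+\sum c_k E_{k\ell})\cdot [U_0]$. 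Feeding this identification through the algebraic section $\theta\colon P\to \Aut(M_{(i)})$ of the preceding lemma (and extending automorphisms of $M_{(i)}$ to $M$ by the identity on the other summands $M_{(j)}$) produces a morphism
\[ \sigma\colon {\rm Gr}\binom{M}{\dd^{1}}_{w}\longrightarrow \Aut(M),\qquad \sigma(U)\cdot U_0=U,\quad \sigma(U_0)=\id. \]

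With $\sigma$ in hand, write $\pi\colon M\to N=M/U_0$ for the quotient and define
\[ f(U^1\subset\cdots\subset U^r)\;=\;\bigl(U^1,\;\pi(\sigma(U^1)^{-1}U^2)\subset\cdots\subset \pi(\sigma(U^1)^{-1}U^r)\bigr). \]
Since $\sigma(U^1)^{-1}U^1=U_0$, each $\sigma(U^1)^{-1}U^k$ contains $U_0$ and descends to an $A$-submodule of $N$ of the correct dimension vector, so $f$ lands in ${\rm Gr}\binom{M}{\dd^{1}}_{w}\times \FNe$. The candidate inverse
\[ (U,V^1\subset\cdots\subset V^{r-1})\;\longmapsto\;\bigl(U\subset \sigma(U)\pi^{-1}V^1\subset\cdots\subset \sigma(U)\pi^{-1}V^{r-1}\bigr) \]
is also a morphism of varieties, and a direct substitution shows the two maps are mutually inverse.

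Commutativity of the upper triangle $pr_1\circ f=p$ is built into the formula. For the lower triangle, $\sigma(U^1)$ induces an $A$-module isomorphism $M/U_0\cong M/U^1$ and compatible isomorphisms $N/V^k\cong M/U^{k+1}$ for $k=1,\ldots,r-1$, so $([N],sp(V^\bullet))=([M/U^1],\ldots,[M/U^r])=sp(U^\bullet)$. The main obstacle is the construction of $\sigma$: realising the Schubert cell as an explicit unipotent subgroup of $P$, lifting it through $\theta$, and verifying that the resulting morphism is algebraic and satisfies the two normalisations $\sigma(U)\cdot U_0=U$, $\sigma(U_0)=\id$. Once $\sigma$ is in place, the remainder is formal; the argument is a direct cycle-quiver adaptation of Spaltenstein's fibration, with $\Aut(M)$ replacing $\Gl_n$ via the section $\theta$.
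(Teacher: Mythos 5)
Your proposal is correct and follows essentially the same route as the paper: both identify the stratum with a Schubert cell in $\mathbb{P}(\soc M_{(i)})$ on which $B$ acts transitively, lift the resulting unipotent parametrization through the section $\theta\colon P\to \Aut(M_{(i)})$ of the preceding lemma to get an algebraic family of automorphisms moving $U_0$ to $U^1$ (your $\sigma(U)$ is the paper's $\phi_U^{-1}$), and then define $f$ and its inverse by translating and passing to the quotient $N=M/U_0$. The only addition is your explicit check of the lower triangle of the diagram, which the paper leaves implicit.
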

  
\begin{proof}
Let $L\in {\rm Gr} \binom{M}{\dd^{1}}_w=\{[0:\cdots :0:1:x_1:\ldots :x_s]\in \mathbb{P}^{s_j -1}\mid x_i \in K, 1\leq i\leq s\}$. Observe, that by definition $B$ operates transitive on ${\rm Gr} \binom{M}{\dd^{1}}_w$ since it is a $B$-orbit. 
Using the previous lemma we can find an algebraic map $\phi \colon  
{\rm Gr} \binom{M}{\dd^{1}}_w \to \Aut (M), \; L\mapsto \phi_L$ with $\phi_L (L)=U_0$. More precisely, if $L$ corresponds to the column $(0,\ldots ,0, 1, x_1, \ldots , x_s)^t$ and $U_0$ to $(0,\ldots ,0 ,1, 0,\ldots ,0)^t$, then we 
take the image of 
\[
\begin{tikzpicture}[baseline=(current bounding box.center)]
\matrix (m) [matrix of math nodes,nodes in empty cells,right delimiter={]},left delimiter={[} ]{
1  &    &      &  &  &   \\
   &    &      &  &  &   \\
   &    &      &  &  &   \\
   &    & 1    &  &  &   \\
   &    & -x_1 &  &  &   \\
   &    &      &  &  &   \\
   &    & -x_s &  &  & 1 \\ 
} ;
\draw[dotted] (m-1-1)-- (m-4-3);
\draw[dotted] (m-4-3)-- (m-7-6);
\draw[dotted] (m-5-3)-- (m-7-3);
\end{tikzpicture}
\]
under $B\subset P\subset \Aut(M_{(i)} \subset \Aut (M)$. 
We define 
\[
\begin{aligned}
f\colon p^{-1} ({\rm Gr} \binom{M}{\dd^{1}})_{w} &\to {\rm Gr} \binom{M}{\dd^{1}}_{w} \times \FNe \\
U=(U^1\subset \cdots \subset U^{r}=M) &\mapsto (U^1, \phi_{U^1}(U)/U_0 ) .
\end{aligned}
\]
This is a morphism of algebraic varieties. To find the inverse, we consider $\pi\colon M\to M/U_0 =N$ the canonical projection and define 
\[
\begin{aligned}
{\rm Gr} &\binom{M}{\dd^{1}}_{w} \times \FNe \to p^{-1} ({\rm Gr} \binom{M}{\dd^{1}}_{w})\\
&\left( L, V=(V^1\subset \cdots \subset V^{r -1}=N)\right) \\
&\quad \mapsto (L \subset \phi_L^{-1}\pi^{-1}(V^1)\subset \phi_L^{-1}\pi^{-1}V^2 \subset \cdots 
\subset \phi_L^{-1}\pi^{-1}V^{r -1 }=M).
\end{aligned}
\]
\end{proof}

\begin{defi} Let $X$ be a scheme. An \text{affine cell decomposition} is a filtration 
\[X=X_m \supset X_{m-1}\supset \cdots \supset X_0\supset X_{-1}=\emptyset \]
by closed subschemes, with each $X_{i}\setminus X_{i-1}$ is a disjoint union of finitely many schemes $U_{ij}$ isomorphic to affine spaces $\mathbb{A}^{n_{ij}}$. 
\end{defi}

\begin{coro} 
Let $\FMd$ be a complete quiver flag variety for the oriented circle and $M$ be a nilpotent representation. Then, it has an affine cell decomposition. If $K=\C$ or $\overline{\Q_{\ell}}$ then it is pure.   
\end{coro}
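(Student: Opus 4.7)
The plan is to induct on $|\dd|$, using Spaltenstein's fibration theorem just proved to reduce at each step to a strictly smaller quiver flag variety. The base case $|\dd|\leq 1$ is immediate: either $\FMd$ is a point, or $\dd^{1}=e_j$ for some $j$ and $\FMd={\rm Gr}\binom{M}{e_j}\cong \mathbb{P}^{s_j-1}$, whose classical Schubert decomposition is already the desired affine paving.

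For the inductive step I would fix once and for all a complete flag refining $\soc(M)\cap\rad^{\bullet}(M)$, as in the theorem, and consider the induced stratification of ${\rm Gr}\binom{M}{\dd^{1}}$ by relative position. Since the flag is complete, $\dd^{1}=e_j$ for some $j\in Q_{0}$, so this Grassmannian is $\mathbb{P}^{s_j-1}$ and the strata ${\rm Gr}\binom{M}{\dd^{1}}_{w}$ are precisely its $B$-orbits, i.e.\ classical Schubert cells isomorphic to $\mathbb{A}^{n_w}$. Spaltenstein's fibration then identifies
\[
p^{-1}\!\left({\rm Gr}\binom{M}{\dd^{1}}_{w}\right) \;\cong\; \mathbb{A}^{n_w}\times \FNe,
\]
where $N=M/U_{0}$ is nilpotent of total dimension $|\dd|-1$ and $\eee = (0,\dd^{2}-\dd^{1},\ldots,\dd^{r}-\dd^{1})$. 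By the inductive hypothesis $\FNe$ admits an affine cell decomposition, and multiplying each of its cells by $\mathbb{A}^{n_w}$ gives an affine cell decomposition of the whole preimage. Ranging over the finitely many strata $w$ partitions $\FMd$ into locally closed pieces each isomorphic to an affine space.

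To assemble this paving into a filtration by closed subschemes as in the definition, I would order the resulting cells by dimension and let $X_i$ be the union of all cells of dimension $\leq i$. The main obstacle I anticipate is verifying that each $X_i$ is actually closed, i.e.\ that the closure of every cell sits inside the union of cells of strictly smaller dimension. This should follow by combining the Bruhat order description of Schubert cell closures in $\mathbb{P}^{s_j-1}$ with the closure relations among the cells of $\FNe$ supplied by the inductive hypothesis, transported across the product decomposition above; the genuinely new phenomena (degenerations of the subspace $U^{1}$) all take place inside the projective space factor, where they are governed by the Bruhat order.

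Finally, purity over $K=\C$ or $K=\overline{\Q_\ell}$ is a standard consequence of the existence of an affine cell decomposition on a projective variety: the localisation long exact sequences for the pairs $(X_i,X_{i-1})$ degenerate because $H^{\bullet}_{c}(\mathbb{A}^{n})$ is pure and concentrated in a single even degree, so the cohomology of $\FMd$ is pure and generated by algebraic classes of cell closures. No ingredient beyond the inductive construction of the paving is required at this step.
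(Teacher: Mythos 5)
Your proposal is essentially the paper's intended argument: the corollary is stated without a written proof precisely because it follows by iterating Spaltenstein's fibration over the relative-position strata of ${\rm Gr}\binom{M}{\dd^{1}}\cong\mathbb{P}^{s_j-1}$, which are the Schubert cells of that projective space, exactly as you describe, with induction on $|\dd|$ and the standard purity consequence at the end. The one point to handle with care is the assembly step: taking $X_i$ to be the union of all cells of dimension $\leq i$ is not automatically a closed subscheme (a priori the closure of a cell need not be a union of cells), and the robust fix is not to order by dimension but to pull back the Schubert filtration of $\mathbb{P}^{s_j-1}$ along $p$ to get a closed filtration of $\FMd$ with successive differences $\mathbb{A}^{n_w}\times\FNe$, and then refine each such locally closed piece by the filtration supplied by the inductive hypothesis (using that a closed filtration of an open stratum extends to one of the ambient closed set by adding the complement); this is in effect what your appeal to transporting closure relations across the product amounts to.
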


Note that there is a closed embedding by forgetting the $A$-module structure 
$\kappa \colon \FMd \to \fl (\ddd ) =:\mcF$. Forgetting all other then the first subspace gives a commutative square 
\[ 
\xymatrix{ \mcF \ar[r]^q & \mathbb{P}^{d_i -1} \\
\FMd \ar[r]^p\ar[u]^{\kappa} & \mathbb{P}^{s_i-1}\ar[u]^{\rho} 
}
\] 
where the vertical maps are closed immersions. By choosing appropiate cells in $\mcF$, there is an affine cell decomposition in $\mcF$ such that 
the intersection with $\FMd$ is a union of cells in $\mcF$. This implies that the open complement of $\FMd$ in $\mcF$ also has an affine cell decomposition, this is the main ingredient to prove the following theorem, for $n=1$ compare \cite[section 4.4, 4.5]{DP}.

\begin{satz}
Let be $K=\C$. Let $Q$ be the oriented cycle, $M$ be a nilpotent representation and $\dd$ a complete dimension filtration. 
The pullback along $\kappa \colon \FMd \to \mcF$ induces a surjective ring homomorphism 
on singular cohomology 
\[ 
\kappa^* \colon H^*(\mcF) \to H^*(\FMd).  
\]
\end{satz}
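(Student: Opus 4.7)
The strategy is to promote the Spaltenstein paving of $\FMd$ given by the Corollary above to a compatible affine cell decomposition of the ambient smooth projective variety $\mcF = \fl(\ddd)$, so that the open complement $U := \mcF\setminus\FMd$ also inherits an affine paving; surjectivity will then follow by a parity argument on a long exact sequence in singular cohomology.

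To produce the compatible paving I would refine the $Q_0$-graded flag $F_M = \bigl(\soc(M)\cap\rad^\bullet(M)\bigr)$ to a complete $Q_0$-graded flag $W$ of $M$ and consider the Schubert stratification of $\mcF$ with respect to $W$, whose pieces are affine cells. Using the commutative square in the excerpt to compare the map $p\colon \FMd \to \mathbb{P}^{s_i-1}$ with the forgetful map $q\colon \mcF \to \mathbb{P}^{d_i-1}$, and then iterating Spaltenstein's fibration on both sides, one checks at each recursive step that a Schubert cell of $\mcF$ is either entirely contained in $\FMd$ (where it coincides with a Spaltenstein cell from Section~3) or entirely disjoint from $\FMd$. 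Hence $\FMd$ is a union of Schubert cells of $\mcF$, and so is $U$, which therefore carries an affine cell decomposition.

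With three compatible affine pavings of $\mcF$, $\FMd$ and $U$ in hand, the distinguished triangle $j_! j^* \to \id \to \kappa_*\kappa^*$ on the constant sheaf (for $j\colon U \hookrightarrow \mcF$ open and $\kappa\colon \FMd \hookrightarrow \mcF$ closed) yields the long exact sequence
$$\cdots \to H^i_c(U) \to H^i(\mcF) \xrightarrow{\kappa^*} H^i(\FMd) \to H^{i+1}_c(U) \to \cdots .$$
Parity vanishing, i.e.\ $H^{2k+1}(\mcF) = H^{2k+1}(\FMd) = H^{2k+1}_c(U) = 0$ because each variety admits an affine paving, makes the sequence collapse into short exact sequences $0 \to H^{2k}_c(U) \to H^{2k}(\mcF) \to H^{2k}(\FMd) \to 0$ in even degrees; this gives the surjectivity of $\kappa^*$, which is a ring homomorphism by contravariant functoriality of singular cohomology.

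The main obstacle is the compatibility claim of the second paragraph: one has to check inductively, on the number of steps $r$ in $\ddd$ via Spaltenstein's fibration, that the Spaltenstein cells of $\FMd$ from Section~3 really are intersections with $\FMd$ of genuine Schubert cells of $\mcF$, rather than merely of a coarser stratification. This amounts to matching the recursive product structure $\GrMe_w \times \FNe$ produced by the Spaltenstein fibration with the Schubert product structure of $\mcF = \prod_{i\in Q_0} \fl(\dd_i)$, paralleling the classical Springer-fibre analysis in type $A$ carried out in \cite[Sections~4.4, 4.5]{DP}.
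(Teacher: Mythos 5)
Your argument is essentially the paper's: both rest on extending the Spaltenstein cell decomposition of $\FMd$ to a compatible affine paving of $\mcF$ (and hence of the open complement $U$) and then using odd-degree vanishing to split the localization sequence into short exact sequences. The only cosmetic difference is that the paper dualizes via the universal coefficient theorem and argues with injectivity of $\kappa_*$ in Borel--Moore homology, whereas you use the equivalent long exact sequence with $H^*_c(U)$; the compatibility of the pavings, which you rightly flag as the main point to check, is asserted with the same brevity in the paper itself.
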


\paragraph{proof:} 
By the universal coefficient theorem for projective varieties, it suffices to show that $\kappa_* \colon H_*^{BM}(\FMd) \to H_*^{BM}(\mcF )$ is injective. Let $U$ be the open complement of $\FMd$ in $\mcF$. By the construction of the Spaltenstein fibration, we get that $\mcF$ has a cell decomposition continuing the one of $\FMd$, therefore $U$ also has also a cell decomposition. Since we have odd-degree vanishing also for $U$ the localization sequence gives a short exact sequence 
\[
0\to H_*^{BM} (\FMd ) \to H_*^{BM} (\mcF ) \to H_*^{BM} (U) \to 0 
\]
\hfill $\Box $

\section{Parametrizing cells by multi-tableaux} \label{PS}

To the nilpotent indecomposable $A$-module $E_i[\ell]$ we associate a row of $\ell $ boxes, indexing the columns by the elements $i-\ell +1,i-\ell, \ldots , i$  in $\Z/n$ from left to right. 
To any nilpotent $A$-module we associate a multipartition $Y=(Y_j)_{j\in \Z/n}$ by taking the young diagram $Y_j$ corresponding to all indecomposable summands which have top supported on $j\in \Z/n$. For example for $n=3$ and \[M=(E_3[3]^2\oplus E_2[2]) \oplus (E_2[4] \oplus E_3[2]) \] we visualize the multipartition $Y_M= ((3,3,2), (4,2), \emptyset )$ as follows, see the figure on the left.  
\[ 
\begin{array}{c}
\makebox[9pt][c]{\text{\tiny{1}}}\makebox[9pt][c]{\text{\tiny{2}}}\makebox[9pt][c]{\text{\tiny{3}}}\makebox[9pt][c]{\text{\tiny{1}}}\makebox[9pt][c]{\text{\tiny{2}}}\\
\ytableausetup{smalltableaux}
\begin{ytableau}
{} & & & \none & \none  \\
& & & \none & \none \\
& &\none &\none & \none  \\
\none & & & & \\
\none & & & \none & \none 
\end{ytableau}
\end{array}
\quad \quad \quad  
\begin{array}{c}
\makebox[9pt][c]{\text{\tiny{1}}}\makebox[9pt][c]{\text{\tiny{2}}}\makebox[9pt][c]{\text{\tiny{3}}}\makebox[9pt][c]{\text{\tiny{1}}}\makebox[9pt][c]{\text{\tiny{2}}}\\
\ytableausetup{smalltableaux}
\begin{ytableau}
{} & & *(gray) &  \none & \none  \\
& & *(gray) & \none & \none \\
& &\none &\none & \none  \\
\none & & & & \\
\none & & *(gray) & \none & \none 
\end{ytableau}
\end{array}
\quad \quad \quad 
\begin{array}{c}
\makebox[9pt][c]{\text{\tiny{1}}}\makebox[9pt][c]{\text{\tiny{2}}}\makebox[9pt][c]{\text{\tiny{3}}}\makebox[9pt][c]{\text{\tiny{1}}}\makebox[9pt][c]{\text{\tiny{2}}}\\
\ytableausetup{smalltableaux}
\begin{ytableau}
{} & & & \none & \none  \\
& & & \none & \none \\
& *(gray) &\none &\none & \none  \\
\none & & & & *(gray) \\
\none & & & \none & \none 
\end{ytableau}
\end{array}
\]
In the middle and the right hand side figure we shaded the socle at $3$ and the socle at $2$ respectively, the socle at $1$ is zero. 
From now on we choose a basis of $M$ such that each box corresponds to a basis vector and we order the basis vectors starting at the first row from left to right and then the second row from left to right and so on.  
When we include a line into the vector space given by the socle at $3$, we find a first 
direct summand (with respect to the fixed basis from before) where the inclusion is nonzero, we call the corresponding box the \textbf{pivot box}. Looking at lines with the same pivot box defines a Schubert cell, 
e.g. we indicate the Schubert cells as follows, the pivot box gets a $1$, the number of stars indicates the dimension of the cell
\[ 
\begin{array}{c}
\makebox[9pt][c]{\text{\tiny{1}}}\makebox[9pt][c]{\text{\tiny{2}}}\makebox[9pt][c]{\text{\tiny{3}}}\makebox[9pt][c]{\text{\tiny{1}}}\makebox[9pt][c]{\text{\tiny{2}}}\\
\ytableausetup{smalltableaux}
\begin{ytableau}
{} & & 1 &  \none & \none  \\
& & \star & \none & \none \\
& &\none &\none & \none  \\
\none & & & & \\
\none & & \star & \none & \none 
\end{ytableau}
\end{array}
\quad 
\begin{array}{c}
\makebox[9pt][c]{\text{\tiny{1}}}\makebox[9pt][c]{\text{\tiny{2}}}\makebox[9pt][c]{\text{\tiny{3}}}\makebox[9pt][c]{\text{\tiny{1}}}\makebox[9pt][c]{\text{\tiny{2}}}\\
\ytableausetup{smalltableaux}
\begin{ytableau}
{} & & 0 &  \none & \none  \\
& & 1 & \none & \none \\
& &\none &\none & \none  \\
\none & & & & \\
\none & & \star & \none & \none 
\end{ytableau}
\end{array}
\quad 
\begin{array}{c}
\makebox[9pt][c]{\text{\tiny{1}}}\makebox[9pt][c]{\text{\tiny{2}}}\makebox[9pt][c]{\text{\tiny{3}}}\makebox[9pt][c]{\text{\tiny{1}}}\makebox[9pt][c]{\text{\tiny{2}}}\\
\ytableausetup{smalltableaux}
\begin{ytableau}
{} & & 0 &  \none & \none  \\
& &  0 & \none & \none \\
& &\none &\none & \none  \\
\none & & & & \\
\none & & 1 & \none & \none 
\end{ytableau}
\end{array}
\]
Let us look at the first cell. The quotients $M/S$ with $S\subset \soc (M_{(3)})$ can be visualized by the left figure below.
\[ 
\begin{array}{c}
\makebox[9pt][c]{\text{\tiny{1}}}\makebox[9pt][c]{\text{\tiny{2}}}\makebox[9pt][c]{\text{\tiny{3}}}\makebox[9pt][c]{\text{\tiny{1}}}\makebox[9pt][c]{\text{\tiny{2}}}\\
\ytableausetup{smalltableaux}
\begin{ytableau}
{} & &\none  & \none & \none  \\
& & & \none & \none \\
& &\none &\none & \none  \\
\none & & & & \\
\none & & & \none & \none 
\end{ytableau}
\end{array}
\quad \quad \quad \quad \quad \quad  
 \begin{array}{c}
\makebox[9pt][c]{\text{\tiny{1}}}\makebox[9pt][c]{\text{\tiny{2}}}\makebox[9pt][c]{\text{\tiny{3}}}\makebox[9pt][c]{\text{\tiny{1}}}\makebox[9pt][c]{\text{\tiny{2}}}\\
\ytableausetup{smalltableaux}
\begin{ytableau}
{} & 0& \none  & \none & \none  \\
& & & \none & \none \\
& 1 &\none &\none & \none  \\
\none & & & & \star  \\
\none & &  & \none & \none 
\end{ytableau}
\end{array}
\]
The right hand side figure above shows a cell in the socle at $2$ with pivot box in row $3$. 
In the figure above on the left each box corresponds to the residue class of a basis vector of $M$ corresponding to a box at the same place. This gives an ordered basis of $M/S$.
To parametrize the cells in the complete flag variety, we need to parametrize the cells in the socles of the successive quotients. Now, to parameterize the cells in $\FMd$ with $\ddd$ complete, we put an $r=\dim M$ into an end box in the column specified by $\dd^1$. Then, we put an $r-1$ into an end or not filled box in the column specified by $\dd^2-\dd^1$, etc. We obtain a filling of $Y_M$ with numbers 
$1,\ldots , r$ which is increasing from left to right in the rows, we call such a filling a \textbf{row multi-tableau} of dimension $\ddd $. Let $\tau $ be a row multi-tableau, then we write $C_{\tau} \subset \FMd$ for the corresponding cell. We can recover a dimension filtration $\ddd=: \Dim \tau$ and an isomorphism class of a module $Y_{\tau}$ from $\tau$ by counting boxes in the columns and by looking at the shape of $\tau$.   
For $k\in \{1, \ldots , r\}$ we write $c_k\in \Z/n$ for the column containing 
$k$ and $r_k\in \N$ for the row containing $k$. We define  
\[ d_{\tau }(k) := \# \{ s\in \{1, \ldots , k-1\}\mid c_s=c_k, r_s>r_k, r_s\neq r_t, \; s<t<k\}
\]
One has by construction
\[ \dim C_{\tau } =\sum_{k=1}^r d_{\tau}(k) .\]  

For example, for $\tau $ as follows, $d_{\tau}(-)$ is as in the table below and therefore $\dim C_{\tau}=9$.
\[
\tau= 
\begin{array}{c}
\makebox[9pt][c]{\text{\tiny{1}}}\makebox[9pt][c]{\text{\tiny{2}}}\makebox[9pt][c]{\text{\tiny{3}}}\makebox[9pt][c]{\text{\tiny{1}}}\makebox[9pt][c]{\text{\tiny{2}}}\\
\ytableausetup{smalltableaux}
\begin{ytableau}
5 & \elf & \vierzehn &\none & \none  \\
2 & 6 &  8& \none & \none \\
3 & \dreizehn &\none &\none & \none  \\
\none & 1 & 9  &  \zehn & \zwoelf \\
\none & 4 & 7 & \none & \none 
\end{ytableau}
\end{array}
\]

\begin{tabular}[c]{l|c|c|c|c|c|c|c|c|c|c|c|c|c|c}
$k$ & $1$ & $2$ & $3$ & $4$ & $5$ & $6$ & $7$ & $8$ & $9$ & $10$ & $11$ & $12$ & $13$ & $14$ \\
\hline 
$d_{\tau }(k)$ & $0$ & $0$ & $0$ & $0$ & $2$ & $2$ & $0$ & $1$ & $1$ & $0$ & $0$ & $0$ & $1$ & $2$ 
\end{tabular}

\noindent
\hspace{1cm}

Now let $Y$ be a tuple of $n$ finite sequences of positive numbers. 
For the $i$-th sequence we write a skew-diagram starting at column $i\in \Z/n$ with row lengths specified by the sequence of positive numbers. 
Any (complete) numbering of the boxes of $Y$ which is increasing in the length is called a row multi-tableau. From the columns of the 
tableau, we can read of the dimension filtration denoted by $\Dim \tau$, we set $\dim \tau := \sum_{k=1}^r d_{\tau}(k)$ and we write 
$Y_{\tau}$ for the $Y$ underlying $\tau$.

\[
\begin{aligned}
\fYd &:= \#\{ \tau \mid Y_{\tau }=Y, \Dim \tau =\ddd \}\\
\fYdk &:= \#\{\tau \mid  Y_{\tau}=Y, \Dim \tau =\ddd, \dim \tau =k\}
\end{aligned}
\]
and for $M$ of shape $Y$ one has $\dim H^*(\FMd ) =\fYd$. Moreover $H^{odd} (\FMd)=0$ and $\dim H^{2k} (\FMd)=\fYdk$.

We can use the following recursion to 
calculate $\fYd$ and $\fYdk$. For $i\in \Z/n$ we denote by $E(Y,i)=(b_1,\ldots , b_s)$ the tuple of right end boxes in $Y$ in column $i$ ordered from top to bottom. For $b \in E(Y,i)$ we write $Y\setminus b$ for the skew-diagram obtained from $Y$ by removing the box $b$. We write the dimension filtration as the sequence of the supports of the subquotients $\ddd= (i_1, \ldots , i_r) \in (Q_0)^r$, i.e. $\dd^1=e_{i_1}, \dd^2=e_{i_1}+e_{i_2}$, etc. and let $E(Y,i_1)=(b_1, \ldots , b_s)$ 
\[ 
\begin{aligned}
\fYd &=  \sum_{b\in E(Y, i_1)} \fYbd \\
\fYdk &= \sum_{m=1}^s \fYbdm
\end{aligned}
\]

\section{Torus fixed points and GKM-theory}

Let $Q$ be an arbitrary quiver and $\mathbb{A}_{r+1}$ the quiver $0\to 1 \to \cdots \to r$.   
Let $\La =KQ\otimes_{K}K\mathbb{A}_{r +1}$. Then a $\La$-module is given by a sequence of $KQ$-module homomorphisms $U_0\xrightarrow{f_0}U_1\xrightarrow{f_1} \cdots \xrightarrow{f_{r -1}} U_{r}$ and let $\mcM $ be the full subcategory of $\La$-modules with $U_0 =0$ and every morphism $f_k$ is a set-theoretic inclusion.

\begin{defi} An object $0\subset U_1\subset \cdots \subset U_r =M$ in $\mcM $ is called \textbf{split} with respect to a direct sum decomposition $M=M_1\oplus \cdots \oplus M_t$ in indecomposable submodules if 
\[ U_s =(U_s\cap M_1)\oplus \cdots \oplus (U_s\cap M_t), \quad  1\leq s\leq r-1 .\]
We call a $\La$-module $U$ in $\mcM $ \textbf{split} if there exists a direct sum decomposition of $U_r$ such that it is split with respect to this direct sum decomposition. 
Fix an additive family of modules with for each isomorphism type precisely one module. The full subcategory of $\mcM $ of split modules with respect to this family defines an additive subcategory $S\mcM $ of $\mcM $.  
\end{defi}

Let $U:=(0\subset U_1\subset \cdots \subset U_r=M)$ be a split module with respect to $M=M_1\oplus \cdots \oplus M_t$, we set 
$(U\cap M_i):= \bigl(0\subset (U_1\cap M_i) \subset \cdots \subset (U_n \cap M_i) =M_i\bigr) $, then $U=(U\cap M_1)\oplus \cdots \oplus (U\cap M_t)$ is a direct sum decomposition into indecomposable $\La$-submodules. 

Let $M= \bigoplus_{j =1}^t M_i$ be a decomposition into indecomposables.  
\[T:=\{ \bigoplus_{j} \la_{j} \id_{M_j} \mid \la_{j} \in K^*\} \subset  \prod_{j}\Aut (M_j) \subset \Aut (M)\]

\begin{lemma} Let $U\in\FMd$. Then, $U$ is split with respect to the decomposition of $M$ if and only if it is a $T$-fixed point. 
\end{lemma}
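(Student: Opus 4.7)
The proof splits into the two directions of the biconditional, with the nontrivial content concentrated in the "$T$-fixed $\Rightarrow$ split" implication. I would organise it as a weight-space argument for the torus $T \subset \Aut(M)$.

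For the easy direction, assume $U$ is split with respect to $M = M_1 \oplus \cdots \oplus M_t$, so that $U_s = \bigoplus_j (U_s \cap M_j)$ for each $s$. Any element $\tau = \bigoplus_j \lambda_j \id_{M_j}$ of $T$ acts on $M_j$ by the scalar $\lambda_j$, hence sends $U_s \cap M_j$ into itself. Summing over $j$ shows $\tau(U_s) = U_s$, so $U$ is a $T$-fixed point of $\FMd$.

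For the converse, I would exploit the fact that $M = \bigoplus_j M_j$ is precisely the weight-space decomposition of $M$ as a rational $T$-module: the torus $T$ acts on $M_j$ by the $j$-th coordinate character $\chi_j\colon T \to \Gm$, $\bigoplus_k \lambda_k\id_{M_k}\mapsto \lambda_j$, and the characters $\chi_1,\ldots,\chi_t$ are pairwise distinct. Assuming $U$ is a $T$-fixed point, each subspace $U_s \subseteq M$ is $T$-stable, and the standard fact on rational representations of a split torus gives $U_s = \bigoplus_j (U_s \cap M_j)$. This says exactly that $U$ is split with respect to the given decomposition of $M$.

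The only step requiring a moment of care is the decomposition of a $T$-stable subspace into weight components. If one prefers to avoid invoking general torus representation theory, it can be replaced by an elementary Vandermonde argument: for $v \in U_s$ with decomposition $v = v_1 + \cdots + v_t$, $v_j \in M_j$, the elements $\tau_\mu := \bigoplus_j \mu^j \id_{M_j} \in T$ satisfy $\tau_\mu \cdot v = \sum_j \mu^j v_j \in U_s$ for every $\mu \in K^*$; choosing $t$ distinct values of $\mu$ and inverting the resulting Vandermonde matrix expresses each $v_j$ as a $K$-linear combination of elements of $U_s$, so $v_j \in U_s \cap M_j$. Since $K$ is infinite, this works, and it is the one place where the argument really uses that $T$ is a torus rather than an arbitrary subgroup of $\Aut(M)$.
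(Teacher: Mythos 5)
Your proof is correct and follows essentially the same route as the paper: both directions reduce to the observation that each subspace $U_s$ of a $T$-fixed flag is $T$-stable and therefore decomposes as $\bigoplus_j (U_s\cap M_j)$. The only (cosmetic) difference is that the paper isolates the component $x_j$ of $x\in U_s$ with a single torus element $f=(\lambda-1)\id_{M_j}\oplus(-1)\id_{M'}$, using $x+f(x)=\lambda x_j\in U_s$, whereas you invoke the weight-space decomposition or, equivalently, a Vandermonde inversion over several torus elements.
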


\begin{proof} Clearly, if $U$ is split, it is fixed under $T$. 
Conversely, let $U\in (\FMd)^T$. Take $x\in U_s$, we need to see that it can be written as $\sum_j x_j$ with $x_j \in U_s \cap M_j$. Write $x=x_j+x^\prime$ with $x^\prime \in M^\prime = M_1\oplus \cdots \oplus M_{j-1} \oplus M_{j+1} \oplus \cdots \oplus M_t$, $x_j\in M_j$ and we claim $x_j\in U_s $. Let $\la \neq 1$ and apply 
$f=(\la -1)\id_{M_j} \oplus (-1)\id_{M^\prime} \in T$, then by assumption $f(x)\in U_s$ and therefore $x+f(x)=\la x_j\in U_s$ which implies $x_j\in U_s$.  
\end{proof}

We remark that for $M$ indecomposable all points in $\FMd$ are $T$-fixed.  
Assume now $M$ is uniserial. Then quiver flag varieties $\FMd$ are either empty or a point, given by an indecomposable $\Lambda$-module $M[\ddd]$. 

\begin{rem} Let $M=M_1\oplus \cdots \oplus M_t$ be a module such that each $M_i$ is uniserial. For $U\in \FMd$ one has $U$ is a $T$-fixed point if and only if one of the following equivalent conditions is fulfilled   \\
(*) $U=\bigoplus_j M_j[\ddd^j]$ as $\Lambda$-module with $\sum_{j=1}^t \ddd^j=\ddd$. \\
(*) $U$ is the image of a direct sum map $\oplus \colon \FMdone \times \cdots \times \FMdt \to \FMd$ with $\sum_{j=1}^t \ddd^j=\ddd$. \\
In particular, $(\FMd)^T$ is a finite set. 
\end{rem}

From now on we investigate the case of nilpotent representation of the oriented cycle. 
We define the modules $M[\ddd]$ for $M=E_i[\ell]$ as follows. 
Let $\la =(\ell_1\geq \ell_2\geq \cdots \geq \ell_r)$ be a partition with at most $r$-parts, $\ell_1=\ell, \ell_j\geq 0$. We define an element in $\mcM$ as follows 
\[ 
E_i[\la ] := (0\subset E_i[\ell_r]\subset \cdots \subset E_i[\ell_1] )
\]
where $E_i[0]:=0$. The category $S\mcM $ is Krull-Schmidt. 
The isomorphism classes of indecomposable objects in 
$S\mcM$ are given by the $E_i[\la], i\in \Z/n$ and $\la$ a partition. One has $\End_{\La}(E_{i}[\la])=\End_{A} (E_i[\ell_1 ])$.

Now, if $\ddd$ is a complete dimension filtration, then the split modules $U\in \FMd$ correspond to the points in the cells $C_{\tau}$ which are given by the inclusion of the pivot box in each successive step (compare previous section). To each row in a row-multi-tableau $\tau$ we assign a split module as follows. Assume the row ends at $i$ and has a filling with positive numbers $a_1<a_2<\cdots <a_{\ell}$, then take 
$E_i[\la]$ with $\la = (0^{m_0}, 1^{m_1} , \ldots , \ell^{m_{\ell}}) $ where $\sum_j m_j =r$ such that $m_j$ is the number of occurences of $j$ given as follows 
\[m_0=r-a_{\ell}, m_1=a_{\ell}-a_{\ell-1}, m_2=a_{\ell -1}- a_{\ell-2}, \ldots , m_{\ell-1} = a_{2}-a_1, m_{\ell}=a_1. \] 

\begin{lemma} Assume $\ddd$ is a complete dimension filtration. 
Then, every cell $C_{\tau}$ contains a unique $T$-fixed point. 
In particular, there is a bijection between isomorphism classes of split modules and row root tableau. 
\end{lemma}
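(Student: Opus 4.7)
The plan is to prove this by induction on $r=\dim M$, using Spaltenstein's fibration from Section~3 to peel off the first step of the filtration. Throughout, I read a row multi-tableau $\tau$ as specifying a pivot box at each step, whose row picks out an indecomposable summand of $M$ and whose column picks out an index in the socle filtration.

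For the base case $r=1$, $\FMd$ is the Grassmannian ${\rm Gr}\binom{M}{e_{i_1}}\cong \mathbb{P}^{s_{i_1}-1}$, and the cells $C_\tau$ are the Schubert cells with respect to the chosen flag refining $F_{i_1}$. Each is of the form $\{[0:\cdots:0:1:x_1:\ldots:x_s]\}$, and setting all $x_i=0$ gives the unique $T$-fixed point of the cell: the line spanned by a single socle basis vector. Such a line lies in a single indecomposable summand and is therefore split by the previous lemma.

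For the inductive step, given $\tau$, I read off the pivot box $b$ labeled $r$, let $j_1$ be its row, and let $L_1\subset M_{j_1}$ be the line spanned by the basis vector corresponding to $b$. Since $L_1\subset \soc(M_{j_1})$, it is $T$-stable, and it sits in the Schubert cell of ${\rm Gr}\binom{M}{e_{i_1}}_w$ prescribed by $b$. Choosing $U_0=L_1$ in Spaltenstein's fibration makes $\phi_{L_1}=\id$, so over $L_1$ the fibration becomes the identification $U\mapsto (L_1,U/L_1)$ with target $\{L_1\}\times \FNe$, where $N=M/L_1=(M_{j_1}/L_1)\oplus \bigoplus_{j\neq j_1}M_j$. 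The torus $T$ descends to $N$, still scaling each indecomposable summand, so the inductive hypothesis applies to the cell $C_{\tau'}\subset \FNe$ corresponding to $\tau'=\tau\setminus b$ (the tableau on $Y_N=Y_M\setminus b$), yielding a unique $T$-fixed point $U'$. Then $f^{-1}(L_1,U')$ is the unique $T$-fixed point of $C_\tau$, split because both $L_1\subset M_{j_1}$ and $U/L_1$ are split in their respective modules.

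For uniqueness, any $T$-fixed $U\in C_\tau$ must have $U^1=L_1$ by the base case applied to the first step, and then $U/L_1\in C_{\tau'}$ is $T$-fixed and uniquely determined by induction. The final bijection follows because cells are indexed by row multi-tableaux and each contains exactly one split module (via the row-by-row assignment described above the lemma). The main obstacle I anticipate is the combinatorial bookkeeping needed to identify the cell $C_{\tau'}$ that Spaltenstein's fibration carves out in $\FNe$ with the cell defined intrinsically for $N$ using its own socle-radical flag $F_N$; this amounts to checking that the quotient of $F_M$ by $L_1$ agrees with $F_N$ after accommodating the new socle contribution coming from $M_{j_1}/L_1$, so that the inductive hypothesis applies verbatim.
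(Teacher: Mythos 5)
Your induction via Spaltenstein's fibration --- taking the pivot line as the reference point $U_0$, so that $\phi_{L_1}=\id$ and the fibre over $L_1$ is $T$-equivariantly identified with $\FNe$, matching the recursive definition of the cells --- is precisely the mechanism the paper relies on; the lemma is stated there without a separate proof, and your argument is a correct formalization of that sketch. The bookkeeping you flag at the end (that $Y_N=Y_M\setminus b$ with the basis induced from $M$, so that $C_{\tau\setminus b}$ is exactly the cell carved out in $\FNe$) is what Section 4 of the paper supplies, so the proof goes through.
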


It is easy to see that the cells $C_{\tau}$ are $T$-equivariant. In fact, our cell decomposition coincides for $K=\C$ with a Bialynicki-Birula cell decomposition, see \cite{Car}. 

If we choose in the Spaltenstein fibration the split inclusions as reference points $U_0$ each factor in the splitting is $T$-invariant. We conclude that there are $\dim C_{\tau}$ one-dimensional $T$-orbits in $C_{\tau}$. 

Each one-dimensional $T$-orbit closure contains precisely two $T$-fixed points, the one corresponding to the split module in $C_{\tau}$ and the other 
one obtained by an \textbf{admissible} swap of box entries from $\tau$, defined as follows: Another row multi-tableau $\tau^\prime$ is obtained by an admissible swap 
from $\tau$ if there is a connected interval of box-entries in two rows let's say $r_k$ and $r_m$ (where $k$ and $m$ are the largest entries in these intervals) which are swapped to obtain $\tau^\prime$ and $\left|\dim C_{\tau}- \dim C_{\tau^\prime}\right| =1$. In this case we write $\tau^\prime := \si_{k,m}(\tau)$. 

From the $T$-fixed points $F:=\{\tau\mid C_{\tau}\subset \FMd\}$ and the one-dimensional $T$-orbit closures we obtain a graph $\Gamma$ which determines a ring 
 \[
 S(\Gamma ) := \left\lbrace (f_{\tau})_{\tau \in F} \in \C[x_1, \ldots , x_t]^{\oplus F}\mid 
\begin{aligned} 
&  \\
  &f_{\tau} - f_{\si_{k,m}(\tau )} \in (x_k-x_m) \C[x_1, \ldots , x_t]\\
  &\text{ for all admissible swaps}
  \end{aligned}
  \right\rbrace
 \] 
with $t=\rk T$.

Let us denote the inclusion of the torus fixed points by $i\colon (\FMd)^T \to \FMd$. The $T$-equivariant cohomology with complex coefficients of $\FMd$ can be described directly by applying GKM-theory, see \cite[Thm 1.2.2]{GKM}. 
\begin{coro}  Let $K=\C$ and $\ddd$ a complete dimension filtration. Then, the map $i^*$ induces an injective ring homomorphism 
\[i^*\colon H_T^*(\FMd) \to H_T^*((\FMd)^T)\]
with image isomorphic to $S(\Gamma )$ described before.  
\end{coro}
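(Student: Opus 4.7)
The plan is to apply the Goresky--Kottwitz--MacPherson theorem \cite[Thm 1.2.2]{GKM}, which reduces the corollary to verifying three hypotheses: equivariant formality of $\FMd$, finitely many $T$-fixed points, and finitely many one-dimensional $T$-orbits. Given these, GKM identifies the image of $i^*$ with the ring of tuples $(f_\tau)$ satisfying, for every one-dimensional orbit closure connecting fixed points $\tau$ and $\tau'$, the congruence that $f_\tau - f_{\tau'}$ is divisible by the $T$-character on the tangent line; so the final step is to match these congruences with the relations defining $S(\Gamma)$.

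For equivariant formality, I would combine the affine cell decomposition from Corollary 3 with odd-degree vanishing of ordinary cohomology. The cells $C_\tau$ are $T$-stable, as is clear from their construction via the iterated Spaltenstein fibration with $T$-split reference inclusions $U_0$. The Leray--Serre spectral sequence of the Borel fibration then degenerates at $E_2$, yielding $H^*_T(\FMd) \cong H^*(BT) \otimes_\C H^*(\FMd)$ as a free $H^*(BT)$-module. This is the standard criterion for equivariant formality and also gives injectivity of $i^*$ (restriction to the fixed-point set).

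For the one-skeleton, the preceding lemma gives the finite parametrization of $(\FMd)^T$ by row multi-tableaux $\tau$. Each cell $C_\tau$, via the iterated Spaltenstein fibration with split reference flags, is $T$-equivariantly isomorphic to an affine space $\A^{\dim C_\tau}$ on which $T$ acts linearly with nonzero weights. Hence each $C_\tau$ contains exactly $\dim C_\tau$ one-dimensional $T$-orbits, namely the coordinate axes through the unique $T$-fixed point of the cell, and summing over $\tau$ yields only finitely many. The closure of each such orbit is a $T$-stable rational curve carrying precisely two $T$-fixed points.

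The main obstacle is the identification, for each one-dimensional orbit closure, of the second $T$-fixed point as $\si_{k,m}(\tau)$ for an admissible swap, together with the computation that the $T$-character on the tangent line equals $x_k - x_m$. This requires a careful bookkeeping through the recursion of the previous section: a coordinate axis in $C_\tau$ corresponds to deforming a split inclusion inside one uniserial summand into a non-split inclusion that degenerates, at the other limit of the torus action, to the analogous split inclusion transferred into a different uniserial summand; the combinatorial effect on $\tau$ is precisely the swap of a connected interval of box entries between rows $r_k$ and $r_m$, and the weight of the one-parameter family is the difference of the characters of the two summands. Once this matching is established, the GKM congruences become exactly the relations defining $S(\Gamma)$, and the corollary follows.
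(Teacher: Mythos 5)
Your proposal follows essentially the same route as the paper, which itself offers no written proof beyond citing \cite[Thm 1.2.2]{GKM} after having established, in the preceding paragraphs, exactly the ingredients you list: odd-degree vanishing from the affine cell decomposition (hence equivariant formality), finiteness of $(\FMd)^T$, the count of $\dim C_{\tau}$ one-dimensional orbits per cell via the split reference points in the Spaltenstein fibration, and the identification of the second fixed point in each orbit closure with an admissible swap. The step you single out as the main obstacle is likewise only asserted, not proved, in the paper, so your sketch is at the same level of detail as the source.
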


\section{Kato's standard modules}
Let $K=\C$, $\dd\in \N_0^{Q_0}$, $G:= \prod_{i\in Q_0} \Gl_{d_i}(K)$ and one has the $G$-representation $\Rd:= \bigoplus_{i\to i+1} \Hom (K^{d_i} , K^{d_{i+1}})$ given by 
$(g_i)\cdot (m_{i\to i+1}) := (g_{i+1} m_{i\to i+1} g_i^{-1})$. We set 
$I_{\dd}:= \{ \ddd:=(i_1, \ldots , i_r )\mid \sum_{j=1}^r i_j =\dd \}$. 
For each $\ddd \in I_{\dd}$ we fix a $Q_0$-graded flag $(V^{k})_{0\leq k \leq r}$ in 
$\bigoplus_{i\in Q_0}K^{d_i}$ of dimension vector $\Dim V^k/V^{k-1}= i_k$. We define a parabolic group $P(\ddd )\subset G$ as the stabilizer of $V^\bullet$ and a $P(\ddd )$-subrepresentation of $\Rd$ by $F(\ddd ):=\{ (m_{i\to i+1}) \in \Rd \mid m_{i\to i+1} (V^k_i)\subset V_{i+1}^{k-1}\}$. 
We set $\pi_{\ddd} \colon E_{\ddd }:= G\times^{P(\ddd )} F(\ddd )\to \Rd, \quad \overline{(g, f)} \mapsto gf$ for the collapsing of the associated vector bundle
$E_{\ddd}$ over $G/P(\ddd )$ and $e_{\ddd} :=\dim_{\C}E_{\ddd}$. Then we define 
\[ \mcL:= \bigoplus_{\ddd \in I_{\dd}} (\pi_{\ddd })_* \underline{\C}[e_{\ddd}] \quad \in D^b_G(\Rd)
\]
where $D^b_G(\Rd)$ is the equivariant derived category of Bernstein and Lunts \cite{BL}. 
Let $\La$ be the set of multipartitions $\la=(\la^1, \ldots , \la^n)$ corresponding to the $G$-orbits $\mcO_{\la}$ in $\Rd$. We fix a point $i_\la\colon \{M\} \inj \mcO_{\la}$.    
The module corresponding in our situation to Kato's standard module $K_{\la}$ shifted by $(-\dim \mcO_{\la})$ is given by 
\[ 
K_{\la}\langle -\dim \mcO_{\la} \rangle := H^*(i_{\la}^{!} \mcL ) = \bigoplus_{\ddd\in I_{\dd}} H_{e_{\ddd} -*}(\FMd )
\]
where $H_*()$ refers to Borel-Moore homology with complex coefficients. 
By the local to global sequence in Borel-Moore homology we get $\dim H_k(\FMd) = \fldk$.  
For any $\Z$-graded vector space $V$ with $V_j=0$ for $j <<0$ we define 
the graded dimension to by 
$ \gdim V := \sum_{j\in \Z} (\dim V_j) t^j \; \; \in (\Z\left[\left[ t \right]\right])\left[t^{-1}\right].  $
Using our cell decomposition we have  
\[
t^{-\dim \mcO_{\la}} \gdim K_{\la} = \sum_{k\in \Z} \left( \sum_{\ddd \in I_{\dd}} \fldek \right) t^k. 
\] 
\begin{rem}
Consider the BBD-decomposition of $\mcL$, apply  $H^*(i_{\la}^!(-))$ and then apply $\gdim$, the resulting equation in the ring of Laurent series expresses\\ $t^{-\dim \mcO_{\la}}\gdim K_{\la}$ as a sum over products given by $\gdim L_{\mu}$ with 
 $\gdim \mcH^*_{\mcO_{\la}}(IC_{\mu})$ where $L_{\mu}$ runs through the simple modules (over the quiver Hecke algebra of the nilpotent $\dd$-dimensional representations of the quiver $Q$) and $IC_{\mu}$ is the simple perverse sheaf on $\Rd$ build from the trivial local system on $\mcO_{\mu}$. It is an observation of Schiffmann (see \cite[Corollary 6.2]{Sch}, generalizing from Lusztig for $n=1$) that 
$\gdim \mcH^*_{\mcO_{\la}}(IC_{\mu})$ is an affine Kazhdan Lusztig polynomial of type $A$. It naturally generalizes the notion of a Kostka polynomial in the sense that it describes the base change expressing a canonical basis $[K_{\la}]$ in a PBW-basis $[L_{\mu}]$ within the Grothendieck group of 
finite-dimensional graded modules, see loc. cit.  
\end{rem}

\section*{Acknowledgements}

I would like to thank Peter McNamara for pointing out that the multiplicity spaces can be zero and therefore, Kato's work can not directly be applied. Furthermore, I would like to mention financial support from the CRC 701 in Bielefeld and the SPP 1388 (Schwerpunktprogramm  Darstellungstheorie) by covering travel costs and research stays. 



\bibliographystyle{alphadin}
\bibliography{CellsInQuiverFlags}

\def\cprime{$'$} \def\cprime{$'$} \def\cprime{$'$}
\begin{thebibliography}{GKM98}


\providecommand{\url}[1]{\texttt{#1}}
\expandafter\ifx\csname urlstyle\endcsname\relax
  \providecommand{\doi}[1]{doi: #1}\else
  \providecommand{\doi}{doi: \begingroup \urlstyle{rm}\Url}\fi

\bibitem[BL94]{BL}
\textsc{Bernstein}, Joseph ; \textsc{Lunts}, Valery:
\newblock \emph{Lecture Notes in Mathematics}. Bd. 1578: {\emph{Equivariant
  sheaves and functors}}.
\newblock Berlin : Springer-Verlag, 1994. --
\newblock  iv+139 S. --
\newblock ISBN 3--540--58071--9

\bibitem[Car02]{Car}
\textsc{Carrell}, J.:
\newblock Torus actions and cohomology.
\newblock {In: }\emph{Algebraic quotients. {T}orus actions and cohomology.
  {T}he adjoint representation and the adjoint action} Bd. 131.
\newblock Springer, Berlin, 2002, S. 83--158

\bibitem[DCP81]{DP}
\textsc{De~Concini}, Corrado ; \textsc{Procesi}, Claudio:
\newblock Symmetric functions, conjugacy classes and the flag variety.
\newblock {In: }\emph{Invent. Math.} 64 (1981), Nr. 2, S. 203--219

\bibitem[Fre09]{Fr}
\textsc{Fresse}, L.:
\newblock Betti numbers of {S}pringer fibers in type {$A$}.
\newblock {In: }\emph{J. Algebra} 322 (2009), Nr. 7, S. 2566--2579

\bibitem[GKM98]{GKM}
\textsc{Goresky}, Mark ; \textsc{Kottwitz}, Robert  ; \textsc{MacPherson},
  Robert:
\newblock Equivariant cohomology, {K}oszul duality, and the localization
  theorem.
\newblock {In: }\emph{Invent. Math.} 131 (1998), Nr. 1, S. 25--83. --
\newblock ISSN 0020--9910

\bibitem[Kat13]{Ka3}
\textsc{Kato}, S.:
\newblock \emph{An algebraic study of extension algebras}.
\newblock 2013. --
\newblock arXiv:12074640[math.RT]

\bibitem[Maz10]{Ma}
\textsc{Mazorchuk}, V.:
\newblock Koszul duality for stratified algebras {II}. {S}tandardly stratified
  algebras.
\newblock {In: }\emph{J. Aust. Math. Soc.} 89 (2010), Nr. 1, S. 23--49

\bibitem[Sch04]{Sch}
\textsc{Schiffmann}, Olivier:
\newblock Quivers of type {$A$}, flag varieties and representation theory.
\newblock {In: }\emph{Representations of finite dimensional algebras and
  related topics in {L}ie theory and geometry} Bd.~40.
\newblock Amer. Math. Soc., Providence, RI, 2004, S. 453--479

\bibitem[Spa76]{Sp}
\textsc{Spaltenstein}, N.:
\newblock The fixed point set of a unipotent transformation on the flag
  manifold.
\newblock {In: }\emph{Indag. Math.} 38 (1976), Nr. 5, S. 542--456

\end{thebibliography}
\end{document}